\documentclass[11pt]{article}

\pdftrailerid{}
\usepackage[T1]{fontenc}
\usepackage[utf8]{inputenc}
\usepackage{lmodern,amsmath,amssymb,amsthm,mathtools,mathrsfs}
\usepackage[a4paper,margin=1.02in]{geometry}
\usepackage{booktabs,longtable,array,microtype}
\usepackage{xcolor}
\usepackage[colorlinks=true,linkcolor=blue!50!black,
  citecolor=blue!50!black,urlcolor=blue!50!black]{hyperref}
\usepackage{enumitem,needspace}
\setlist[itemize]{leftmargin=1.5em,itemsep=2pt,topsep=4pt}
\setlist[enumerate]{leftmargin=1.8em,itemsep=2pt,topsep=4pt}
\allowdisplaybreaks
\numberwithin{equation}{section}
\newtheorem{theorem}{Theorem}[section]
\newtheorem{proposition}[theorem]{Proposition}

\newtheorem{corollary}[theorem]{Corollary}
\theoremstyle{definition}

\theoremstyle{remark}

\newcommand{\dd}{\,\mathrm d}

\theoremstyle{plain}
\newtheorem{conjecture}[theorem]{Conjecture}
\newcommand{\Real}{\operatorname{Re}}
\newcommand{\Imag}{\operatorname{Im}}
\newcommand{\De}{D_{5,\mathrm{even}}}
\newcommand{\Do}{D_{5,\mathrm{odd}}}
\hypersetup{
  pdftitle={A quadratic critical-value conjecture for the fifth Bessel moment},
  pdfauthor={Jonas Matuzas},
  pdfsubject={A pure fifth-Bessel conjecture and an exact twisted symmetric-square norm identity},
  pdfkeywords={Bessel moment, modular form, critical L-value, interval arithmetic}}
\title{A quadratic critical-value conjecture\\for the fifth Bessel moment}
\author{Jonas Matuzas\\
\href{mailto:jonas.matuzas@gmail.com}{\nolinkurl{jonas.matuzas@gmail.com}}}
\date{}
\begin{document}
\maketitle
\begin{abstract}
We conjecture a quadratic critical-value evaluation of
$\int_0^\infty K_0(t)^5\,\dd t$ for a weight-three, level-60 newform.
We prove that its twisted symmetric-square value at $2$ equals
$\sqrt{15}\pi^2$ times its Petersson norm, and give exact norm and
coefficient-twist identities. These modular theorems identify the companion
norm formula with Lim--Tu--Yu's conjecture; they do not prove the Bessel
comparison. Chuang's period formulas relate the even and odd determinants.
Directed intervals bound the individual-period discrepancy by $10^{-358}$.
Full proofs and reproducible certificates accompany the note.
\end{abstract}

\noindent\textbf{Generative-AI disclosure and author responsibility.}
The conjecture was obtained with OpenAI's ChatGPT-6-pro, which was also used
for mathematical exploration, drafting, and development of the verification
programs. The author is responsible for the final manuscript.

\medskip
\noindent\textbf{2020 Mathematics Subject Classification.}
Primary 11F67; Secondary 33C10, 11F11.

\medskip
\noindent\textbf{Keywords.}
Bessel moment; modular form; critical $L$-value; Petersson norm; interval arithmetic.

\section{The individual-period conjecture}
Let $r=\sqrt5>0$ and let $f=\sum a_nq^n$ be the normalized newform in
$S_3(\Gamma_0(60),\chi_{-15})$ with LMFDB label \texttt{60.3.b.a}
\cite{LMFDB}, in the embedding
\[
 a_3=r-2i,\qquad a_5=r(1+2i),\qquad a_7=-8i.
\]
Write $\chi_d$ for the primitive quadratic character of discriminant $d$.
Our classical, unshifted normalization is
\begin{equation}\label{eq:L}
 L(f,s)=\sum_{n\ge1}a_nn^{-s},\qquad
 L:=L(f,2)=4\pi^2\int_0^\infty y f(iy)\,\dd y,
\end{equation}
with analytic continuation of the Dirichlet series.
\begin{conjecture}\label{conj:main}
\begin{equation}\label{eq:main}
 \boxed{A_0:=\int_0^\infty K_0(t)^5\,\dd t
 \stackrel?=\frac\pi4\big[(125-13r)\Real(L^2)+(41r-125)\Imag(L^2)\big].}
\end{equation}
\end{conjecture}
The square precedes the real or imaginary part, and the measure is $\dd t$,
not $t\,\dd t$. Unlike a norm expression involving $|L|^2$, the right side
retains the phase of the critical period. Thus a determinant relation alone
cannot determine this individual entry.

Broadhurst's program \cite{Bro16,Bro17Paris} and the period relations of
Fres\'an--Sabbah--Yu \cite{FSY} and Chuang \cite{Chu} provide the setting.
Lim--Tu--Yu identify $f$ \cite[Theorem~1.2.2(iii)]{LTY}, confirming Evans's
modularity conjecture \cite[\S4]{Evans}, not an individual-period evaluation.
Broadhurst's Paris lecture, reporting joint work with D.~P.~Roberts, already
announces even-power moments, twisted $L$-series and quadratic relations
\cite[pp.~13, 15]{Bro17Paris}. In its notation $A_0=M(0,5,0)$;
its explicit five-Bessel formulas \cite[\S\S2.1, 2.5]{Bro17Paris} and
Zhou's level-15 evaluations \cite{ZhouW} instead use odd powers of $t$.
Our contribution is a specific candidate, not a new Bessel--$L$-value framework.

\section{Two exact modular identities}
For a reproducible definition, put $\eta_d=\eta(d\tau)$ and
\begin{equation}\label{eq:eta}
\begin{gathered}
 P_1=\frac{\eta_2\eta_6^2\eta_{10}^4}{\eta_{30}},\quad
 P_2=\frac{\eta_2^4\eta_{10}\eta_{30}^2}{\eta_6},\quad
 P_3=\frac{\eta_6^4\eta_{10}^2\eta_{30}}{\eta_2},\quad
 P_4=\frac{\eta_2^2\eta_6\eta_{30}^4}{\eta_{10}},\\
 \omega=\frac{1-2i}{r},\quad
 \epsilon=\frac{5+4r+i(2r-10)}{15},\quad
 f=P_1+\omega P_2+3\epsilon P_3+3\epsilon\bar\omega P_4.
\end{gathered}
\end{equation}
The eta criterion and Sturm bound $36$ verify this specification.
The Petersson pairing below is linear in the first variable and is
\emph{not} divided by the index $144$.
\begin{proposition}\label{prop:modular}
Let $\sigma(r)=-r$, $\sigma(i)=-i$, acting on the Fourier coefficients of $f$.
Then
\begin{equation}\label{eq:normtwist}
 \boxed{\langle f,f\rangle_{60}
 =\frac{3(5-r)}{2\pi^4}|L|^2,\qquad
 L(f^\sigma,2)=\frac{(r-1)(4+3i)}{10}L,}
\end{equation}
where $\langle f,f\rangle_{60}
=\int_{\Gamma_0(60)\backslash\mathbb H}|f(\tau)|^2y^3\,\dd x\,\dd y/y^2$.
\end{proposition}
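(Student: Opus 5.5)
The plan is to treat both identities in \eqref{eq:normtwist} as exact statements about the period lattice of $f$. I would prove the twist identity first and then feed it into the norm computation.

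\textbf{Step 1: identify $f^\sigma$.} The Galois orbit of \texttt{60.3.b.a} has four embeddings. I expect $f^\sigma$ to coincide, up to an explicit constant, with the image of $f$ under an Atkin--Lehner operator $W_Q$ for some $Q\,\|\,60$ (for instance $Q=3,5$ or $15$), possibly composed with complex conjugation of coefficients. I would test this on the eta basis \eqref{eq:eta}. Each $W_Q$ permutes the eta quotients $\eta_d$ up to explicit eighth roots of unity and powers of $Q$, so it permutes $P_1,\dots,P_4$ up to constants. Acting on $f=P_1+\omega P_2+3\epsilon P_3+3\epsilon\bar\omega P_4$ and comparing with $f^\sigma=P_1+\omega^\sigma P_2+\cdots$ then yields an exact pseudo-eigenvalue $\lambda_Q$. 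The Sturm bound $36$ confirms the identification.

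\textbf{Step 2: the twist identity.} Once $f^\sigma=\lambda_Q f|W_Q$, I would write $L(f^\sigma,2)$ as the Mellin integral \eqref{eq:L} of $f|W_Q$. The substitution $y\mapsto 1/(Qy)$, or its analogue for a partial Atkin--Lehner matrix, turns this into $L(f,2)$, or into $L(\bar f,2)=\bar L$ at the self-dual point $s=2$, times an explicit factor $Q^{\pm1/2}$. Combining this factor with $\lambda_Q$ should give $(r-1)(4+3i)/10$. If a conjugate appears, I would also use $L(\bar f,2)=\overline{L}$.

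\textbf{Step 3: the norm identity.} For weight $3$, Haberland's formula expresses $\langle f,f\rangle_{60}$, unnormalized and summed over the $144$ cosets of $\Gamma_0(60)$ in $\mathrm{SL}_2(\mathbb Z)$, as a finite bilinear expression in the period polynomials of $f|\gamma$. These periods are $r_n(f|\gamma)=\int_0^{i\infty}(f|\gamma)(\tau)\tau^n\,\dd\tau$ for $n=0,1$. By Eichler--Shimura, these periods are determined by Manin symbols. Within the Hecke eigen-piece cut out by $f$, they are $K_f$-linear combinations of two periods $\omega_\pm$, where $K_f=\mathbb Q(r,i)$ is the coefficient field. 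I would compute the weight-$3$ modular symbols for $\Gamma_0(60)$ with character $\chi_{-15}$ and project to the $f$-eigenspace. This expresses every $r_n(f|\gamma)$ exactly in terms of $L$ and $\bar L$; Step 2 is what lets the conjugate-embedding contributions be rewritten in terms of $L$ as well. The critical values are $s=1,2$, and $L(f,1)$ is tied to $L(\bar f,2)$ by the functional equation. Substituting into Haberland's sum should produce $|L|^2$ times an element of $K_f$, and the factor $\pi^4$ comes from the normalization $L=4\pi^2\int y f$. Finally, I would check that this element equals $3(5-r)/(2\pi^4)$ exactly, and cross-check numerically by evaluating $\langle f,f\rangle_{60}$ through Rankin--Selberg, i.e.\ via the symmetric-square value mentioned in the abstract.

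\textbf{Main obstacle.} I expect the bookkeeping in Step 3 to be hardest. This means the exact projection of $144$ coset periods onto the $f$-eigenspace with the nontrivial character, and the verification that all cross terms collapse to a single multiple of $|L|^2$ rather than involving $\Real(L^2)$. I would first try to reduce the work by using the Atkin--Lehner symmetries from Step 1 to group cosets by cusp. Any surviving cross terms would be shown to cancel by the symmetry $f^\sigma\leftrightarrow f$ established in Step 2.
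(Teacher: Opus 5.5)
\textbf{Where the proposal fails: Step~2.} Your Step~1 is sound, and the right operator is $W_5$ (equivalently $W_{20}$). By Atkin--Li, $f|W_5$ is a multiple of the newform with coefficients $\chi_5(p)a_p$ away from $5$. Indeed $\chi_5(3)a_3=-r+2i=\sigma(a_3)$ and $\chi_5(7)a_7=8i=\sigma(a_7)$, and on the eta basis $W_5$ swaps $P_1\leftrightarrow P_2$ and $P_3\leftrightarrow P_4$ up to constants. By contrast, $W_4$, $W_{15}$ and $W_{60}$ send $f$ to multiples of $f$ or $\bar f$. The problem is the change of variables. A substitution $y\mapsto 1/(Qy)$ needs a matrix carrying the path $\{0,\infty\}$ to itself, and among Atkin--Lehner matrices only the Fricke ones do. Those give nothing beyond the functional equation $L(f,1)\leftrightarrow\bar L$. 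For $W_5=\bigl(\begin{smallmatrix}5a&b\\60c&5d\end{smallmatrix}\bigr)$, the path $\{0,\infty\}$ goes to $\{b/(5d),\,a/(12c)\}$, a geodesic between cusps of level $5$ and $12$. So the Mellin integral of $f|W_5$ is the $f$-period of a genuinely different modular symbol, and no ``partial analogue'' of the substitution exists. A modulus check confirms that no formula of your shape can be right. Pseudo-eigenvalues have absolute value $1$, so your mechanism yields only multipliers whose absolute value is a power of $\sqrt Q$. But $|(r-1)(4+3i)/10|=(\sqrt5-1)/2$, whose square $(3-\sqrt5)/2$ is irrational.

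\textbf{What is missing.} You need an actual period relation for the inner twist. The paper uses Birch's lemma to write $\sqrt5\,L(f\otimes\chi_5,2)$ as, up to normalization, the $f$-period of $w_5=\sum_a\chi_5(a)(X-aY/5)\{a/5,\infty\}$. It then reduces $w_5$ in the $f$-eigenquotient to a $K_f$-multiple of $X\{0,\infty\}$ by an explicit Manin-symbol identity in $U_3$ modulo $T_7^2+64$. This gives $rL(f\otimes\chi_5,2)=[9-r+i(8-2r)]L/5$. Finally it restores the Euler factor at $5$ via $f\otimes\chi_5=f^\sigma-a_5(f^\sigma)f^\sigma(5\tau)$. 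In your framework you would have to carry out the analogous reduction of $W_5\{0,\infty\}$; that reduction is the proof, not a substitution.

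\textbf{Step~3 (the norm).} This is a legitimate alternative to the paper's argument, which unfolds against a holomorphic Eisenstein kernel $H$ and projects $H$ onto $f$ by a Sturm identity modulo $(T_7^2+64)R$. Two corrections, though. First, Step~3 does not need Step~2. The period map is $K_f$-linear on the two-dimensional $f$-eigenquotient, which is spanned by $X\{0,\infty\}$ and $Y\{0,\infty\}$ since $L\neq0$. So only the root number is needed, and it can be read off from Fricke on the eta basis ($P_1\leftrightarrow P_3$, $P_2\leftrightarrow P_4$). Second, the absence of $L^2$ cross terms is structural and has nothing to do with $\sigma$. In odd weight the cup product is invariant under the star involution, so Haberland's sum involves only $|L(f,1)|^2/\pi^2$ and $|L(f,2)|^2/\pi^4$. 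By the functional equation, both are algebraic multiples of $|L|^2/\pi^4$. Your route then works, at the cost of the $144$-coset bookkeeping that the paper's single unfolding avoids.
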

\begin{proof}[Proof sketch]
The ancillary norm proof constructs a holomorphic Eisenstein kernel $H$.
Unfolding with all bad Euler factors retained gives
$\langle f,H\rangle=75\beta|L|^2/(128\pi^4)$,
where $\beta=(1-(r+2i)/9)(1-r(1-2i)/25)$.
A rational Sturm witness and Hecke spectral projection give
$\langle f,H\rangle=\bar\delta\langle f,f\rangle_{60}$,
$\delta=(25+2r+i(5-r))/288$.
The discarded part $(T_7^2+64)R$ is orthogonal to $f$, since
$T_7^*=-T_7$ and $T_7f=-8if$.
Now $75\beta/(128\bar\delta)=3(5-r)/2$ proves the norm formula.

For $w=X\{0,\infty\}$ and
$w_5=\sum_{a=1}^4\chi_5(a)(X-aY/5)\{a/5,\infty\}$, a rational Manin-symbol
witness gives $w_5=w(51-13U_3+3U_3^2-U_3^3)/30+R_5(T_7^2+64)$.
Pairing with $f$ gives $rL(f\otimes\chi_5,2)=[9-r+i(8-2r)]L/5$.
The level-$300$ Sturm identity
$f\otimes\chi_5=f^\sigma-(-r+2ir)f^\sigma(5\tau)$ retains the factor at $5$
and gives the multiplier in \eqref{eq:normtwist}.
These modular computations \cite{Stein} use no Bessel values.
\end{proof}

In particular, the right side of \eqref{eq:main} also equals
$\frac{5\pi}{2}\Real[(7r-5i)L(f,2)L(f^\sigma,2)]$.
This is an exact rewriting of the candidate, not a Bessel evaluation.

\section{The determinants and the remaining comparisons}
For $j\ge0$, let $A_j,B_j,C_j$ denote respectively the integrals of
$t^{2j}K_0^5$, $t^{2j}I_0K_0^4$, and $t^{2j}I_0^2K_0^3$ on $(0,\infty)$.
Lim--Tu--Yu's notation \cite[eq.~(42)]{LTY} specializes to
\begin{equation}\label{eq:Ddef}
 \Do=B_0,\qquad \De=A_0C_1-A_1C_0.
\end{equation}
Here ``even/odd'' refers to the $I_0$-power parity, not the power of $t$;
both determinants use even powers of $t$.
\begin{corollary}[Of Chuang's period formulas]\label{cor:det}
\begin{equation}\label{eq:Drelation}
 \De=\frac{\pi^2}{2\sqrt{15}}\Do.
\end{equation}
\end{corollary}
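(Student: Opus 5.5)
The relation follows by expressing both determinants through the same critical $L$-datum of $f$ via Chuang's period formulas, and then comparing. Concretely, I would invoke Chuang's results \cite{Chu} (refining Fres\'an--Sabbah--Yu \cite{FSY}) in two forms. The odd determinant is a single entry of the period matrix of the fifth symmetric power of the Kloosterman motive. The even determinant is a $2\times2$ minor of the same period matrix. Chuang writes each as an explicit algebraic multiple of $\pi^{k}$ times the Deligne periods $c^\pm$ of the associated rank-two motive, which by \cite{LTY} is the motive of $f=$ \texttt{60.3.b.a}. So the plan is: (i) write $\Do=B_0=\alpha\,\pi^{a}c^{+}$-type expression, (ii) write $\De=A_0C_1-A_1C_0=\beta\,\pi^{b}c^{+}c^{-}/(\text{period of the Tate twist})$-type expression, and (iii) eliminate the motivic periods.

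For the elimination I would use the Lim--Tu--Yu specialization \cite[eq.~(42)]{LTY}. There $\Do$ and $\De$ are identified respectively with a multiple of $|L|$-type data and the companion norm data; in the present normalization these become $L(f,2)$-related and $\langle f,f\rangle_{60}$-related. Proposition~\ref{prop:modular} then converts between them. The norm identity gives $\langle f,f\rangle_{60}=\tfrac{3(5-r)}{2\pi^4}|L|^2$, and this is exactly the kind of relation needed to pass from the determinant of a $2\times2$ period block (which is a Petersson norm, by Legendre-type/quadratic relations) to a square of critical values. Alternatively, avoiding $f$ entirely: Chuang's quadratic relations for the Bessel period matrix (the Betti--de Rham pairing being the intersection form) give directly that the $2\times2$ minor built from $(K_0^5,I_0^2K_0^3)$ with weights $t^0,t^2$ equals a rational multiple of $\pi^2/\sqrt{15}$ times the cofactor entry corresponding to $I_0K_0^4$. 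Here $\sqrt{15}$ arises as the square root of the discriminant of the de Rham intersection matrix (equivalently the conductor factor $\chi_{-15}$). I would first try this direct route: invert the quadratic relation $P\,D_{\mathrm{dR}}^{-1}P^{T}=(2\pi i)^{w}D_{B}$ and read off the relevant minor-versus-cofactor identity (Jacobi's complementary minor theorem) for the $3\times3$ block relevant to $K_0^5$, $I_0K_0^4$, $I_0^2K_0^3$.

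The main obstacle is bookkeeping of normalizations. This includes the exact rational entries of Chuang's de Rham and Betti intersection matrices for $n=5$, the powers of $2$ and $\pi$ from $I_0$ versus the Betti cycles, and the sign and square-root choice giving $+\sqrt{15}$. I would pin the constant $\tfrac{1}{2\sqrt{15}}$ by computing the determinant of the intersection matrix symbolically. As an independent check, I would evaluate $A_0,A_1,B_0,C_0,C_1$ numerically to high precision and confirm that the ratio $\De/\Do$ equals $\pi^2/(2\sqrt{15})$.
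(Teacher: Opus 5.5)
Your ``direct'' route (invert the quadratic relation, then compare with a cofactor) is essentially the paper's proof. The paper proceeds as follows.
\begin{enumerate}
\item Take the $3\times3$ matrix $X$ with rows $(A_j)_{j=0}^2$, $(\pi B_j)_{j=0}^2$, $(\pi^2C_j)_{j=0}^2$.
\item Specialize Chuang's results to $XJX^t=64\pi^6H_B$ and invert: $X^{-1}=JX^tH_B^{-1}/(64\pi^6)$.
\item Since $J_{2,*}=(120,0,0)$ and $H_B$ decouples the $B$-row, this gives $(X^{-1})_{2,1}=-75B_0/(4\pi^5)$.
\item The adjugate formula gives the same entry as $-\pi^2\De/\det X$; equate the two.
\end{enumerate}
So it is $\De$ (up to the factor $-\pi^2$) that is the cofactor. $B_0$ is not a cofactor; it enters as an entry of $X^t$ in the inverted relation.

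The one input beyond the quadratic relation is $\det X=2\pi^9/(75\sqrt{15})$, which the paper also takes from \cite{Chu}. Your plan to compute the determinant symbolically recovers only $(\det X)^2=(64\pi^6)^3\det H_B/\det J$. That explains the $\sqrt{15}$, as a ratio of Betti and de Rham discriminants. It does not fix the sign of $\det X$, and hence not the sign of the identity. For that you need Chuang's determinant formula or a rigorous sign check.

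The route (i)--(iii) you present first would not work. \cite[eq.~(42)]{LTY} only defines $\Do$ and $\De$ as determinants of Bessel moments. Their identification with $|L|^2$ or with $\langle f,f\rangle_{60}$ is Lim--Tu--Yu's open conjecture \eqref{eq:LTY}, equivalently \eqref{eq:Dnorm}. Proposition~\ref{prop:modular} is a purely modular identity and says nothing about Bessel moments. The modularity theorem of \cite{LTY} matches Galois representations and $L$-functions, not period structures. So no proven statement lets you ``eliminate the motivic periods'' through $f$.

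Using that route would be circular. In the paper the logic runs the other way: this corollary is what makes the two boxed forms in \eqref{eq:Dnorm} equivalent. Drop route (i)--(iii) and keep the direct argument.
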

\begin{proof}
Let $X$ have rows $(A_j)_{j=0}^2$, $(\pi B_j)_{j=0}^2$,
$(\pi^2C_j)_{j=0}^2$.
Specializing \cite[Propositions~13, 18, 24, 32 and Corollary~39]{Chu}
gives $XJX^t=64\pi^6H_B$, where
\[
 H_B=\begin{pmatrix}1/4&0&1/8\\0&-1/10&0\\1/8&0&3/40\end{pmatrix},
 \quad J_{2,*}=(120,0,0),\quad
 \det X=\frac{2\pi^9}{75\sqrt{15}}.
\]
Indices start at zero. From $X^{-1}=JX^tH_B^{-1}/(64\pi^6)$,
$(X^{-1})_{2,1}=-75B_0/(4\pi^5)$.
The cofactor expression for the same entry is $-\pi^2\De/\det X$.
Equating them proves \eqref{eq:Drelation}; the ancillary notes give
all changes of basis.
\end{proof}

The companion norm conjecture and its equivalent determinant form are
\begin{equation}\label{eq:Dnorm}
 \boxed{\Do\stackrel?=\frac{3\sqrt{15}(5-r)}2|L|^2,
 \qquad \De\stackrel?=\frac{3\pi^2(5-r)}4|L|^2.}
\end{equation}
By Proposition~\ref{prop:modular}, the first is equivalently
$B_0\stackrel?=\sqrt{15}\pi^4\langle f,f\rangle_{60}$.
This is still a Bessel-to-modular comparison, not part of that proposition.

Lim--Tu--Yu already conjecture \cite[eq.~(44)]{LTY}
\begin{equation}\label{eq:LTY}
 \Do\stackrel?=\pi^2\mathscr L(2)=8\sqrt{15}\,\mathscr L(3),
 \qquad \mathscr L(s)=L(\chi_{-4}\operatorname{Sym}^2f,s).
\end{equation}
The modular comparison, unlike the Bessel equality in \eqref{eq:LTY}, is exact.
\begin{proposition}[Twisted symmetric-square norm identity]\label{prop:criticalnorm}
In the full Euler-factor normalization of \cite{LTY},
\begin{equation}\label{eq:factorization}
 \boxed{\mathscr L(2)=\sqrt{15}\pi^2\langle f,f\rangle_{60}
 =\frac{3\sqrt{15}(5-r)}{2\pi^2}|L|^2.}
\end{equation}
Moreover, $\mathscr L(3)=\pi^4\langle f,f\rangle_{60}/8$.
\end{proposition}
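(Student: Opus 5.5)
The plan is to use the classical Rankin--Shimura unfolding that links the adjoint (symmetric-square) $L$-function at the edge of the critical strip to the Petersson norm. The twist by $\chi_{-4}$ is then handled by the fact that $f$ has CM-free nebentypus $\chi_{-15}$, so that $\operatorname{Sym}^2 f\otimes\chi_{-4}$ is closely tied to the Rankin product $f\times\bar f$. Concretely, $\bar f=f\otimes\bar\chi_{-15}$ up to bad factors. The Rankin--Selberg integral $\int|f|^2E(\tau,s)y^{3}\,d\mu$ unfolds to $\sum|a_n|^2n^{-s-2}$, and this factors as $\zeta(s)L(\operatorname{Sym}^2f\otimes\chi_{-15}^{-1},s+2)$ divided by a zeta factor. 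Its residue at the pole gives $L(\operatorname{ad}f,1)$, normalized, in terms of $\langle f,f\rangle$. The central difficulty is that $\mathscr L(s)=L(\chi_{-4}\operatorname{Sym}^2 f,s)$ is not literally the adjoint. In the unshifted normalization, $\operatorname{Sym}^2 f$ has weight $5$, and $\operatorname{ad}f=\operatorname{Sym}^2f\otimes\bar\chi_{-15}$ shifted by $2$. So I would first identify $\chi_{-4}\operatorname{Sym}^2 f$ with a twist of the adjoint by the quadratic character $\chi_{-4}\chi_{-15}=\chi_{60}$ (equivalently $\chi_{15}$ up to 2-part). The target value $\mathscr L(2)$ is a critical value of a twisted adjoint, not the edge value.

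The approach I would take is therefore Shimura's theorem on critical values of $L(\operatorname{Sym}^2f\otimes\psi,s)$, proved via the holomorphic projection of $f\cdot\theta_\psi\cdot E$. Here $\theta_\psi$ is the weight-$1/2$ or weight-$3/2$ theta series attached to the odd/even character $\psi$. Because the twist makes the value critical, Shimura's integral $\int f\,\overline{\theta_\psi(t\tau) E_k(\tau)}\,y^{3}\,d\mu$ unfolds, with all bad Euler factors at $2,3,5$ retained as in the proof of Proposition~\ref{prop:modular}, to $\mathscr L(2)$ times explicit gamma factors, powers of $\pi$, and Euler-factor corrections. The resulting half-integral-weight product lies in $S_3(\Gamma_0(60),\chi_{-15})$ after a trace/level adjustment. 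Its projection to $f$ is $c\,f+R$, with $R$ orthogonal to $f$ via the same Hecke-annihilator trick $(T_7^2+64)$. The scalar $c$ is found by a Sturm-bound coefficient comparison. This yields $\mathscr L(2)=c'\langle f,f\rangle_{60}$ with $c'$ algebraic, and the computation must give $c'=\sqrt{15}\pi^2$. The second equality in \eqref{eq:factorization} then follows immediately from the norm formula in \eqref{eq:normtwist}.

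For $\mathscr L(3)$ I would not repeat the unfolding. Instead I would use the functional equation of the degree-three $L$-function $\mathscr L$. Its gamma factor is of the form $\Gamma_{\mathbb R}(s-1+\delta)\Gamma_{\mathbb C}(s)$, with conductor a divisor of $60^2\cdot 16$ and root number $+1$. The points $s=2$ and $s=3$ are symmetric about the centre $5/2$ in the unshifted normalization. Evaluating the ratio of gamma factors and the conductor power at $s=2$ versus $s=3$ should give $\mathscr L(2)/\mathscr L(3)=8\sqrt{15}/\pi^2$, which is consistent with \eqref{eq:LTY}. This yields $\mathscr L(3)=\pi^4\langle f,f\rangle_{60}/8$. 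The conductor and local factors at $2,3,5$ would be read off from \cite{LTY}'s full Euler-factor normalization.

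The main obstacle is the exact bookkeeping of the bad local factors and the conductor at $2,3,5$. At these primes $f$ is ramified, and $\chi_{-4}$ adds ramification at $2$. Any mismatch changes the rational and $\sqrt{15}$ constant. I would first settle the local Euler factors of $\mathscr L$ at $p\mid 60$ from the $a_p$ values ($a_3,a_5$ given, $a_2=0$ presumably). Then I would certify the final constant numerically to high precision by computing $\mathscr L(2)$ through the approximate functional equation, using the exact $|L|^2$ as a cross-check, before trusting the Sturm-witness algebra.
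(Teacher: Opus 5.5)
\textbf{The central step fails as stated.} You claim that Shimura's theta--Eisenstein integral unfolds to $\mathscr L(2)$, and that at that point the kernel is an exact form in $S_3(\Gamma_0(60),\chi_{-15})$ to which a Sturm comparison applies.

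\emph{Why the kernel reaches $s=3$, not $s=2$.} Because $\chi_{-4}$ is odd, the theta series is forced to be $\theta=\sum_{m\ge1}\chi_{-4}(m)m\,q^{m^2}=\eta(8\tau)^3$, of weight $3/2$. Replacing $f$ by a twist $f\otimes\lambda$ does not help, since the theta character would be $\chi_{-4}\lambda^{-2}$, which is still odd. Hence the Eisenstein factor also has weight $3/2$. Writing its terms with $\operatorname{Im}(\gamma\tau)^w$, unfolding gives
$\langle f,\theta E(\cdot,w)\rangle=\Gamma(2+w)(4\pi)^{-2-w}D(3+2w)$, with $D(s)=\sum_{m\ge1}\chi_{-4}(m)a_{m^2}m^{-s}$. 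The only critical point at which the weight-$3/2$ Eisenstein factor can be holomorphic (up to Zagier's harmonic correction) is $w=0$, i.e.\ $s=3$. The point $s=2$ corresponds to $w=-1/2$, the reflection of $w=0$ under the Eisenstein functional equation, and there $\theta E$ is not even nearly holomorphic.

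\emph{Why $s=2$ is singular.} $\zeta^{(30)}(2s-4)$ has a triple zero at $s=2$: each factor $1-p^{-(2s-4)}$ with $p\mid 30$ vanishes there, while $\zeta(0)=-1/2$. Since $R_0(2)\neq0$ and $\mathscr L(2)\neq0$, the identity $\zeta^{(30)}(2s-4)D(s)=R_0(s)\mathscr L(s)$ forces $D$ to have a pole at $s=2$. So your ``Euler-factor corrections'' are infinite at that point, and $\mathscr L(2)$ is not a multiple of any value of $D$. There is therefore no kernel whose Sturm-bound identification could produce $c'=\sqrt{15}\pi^2$ directly.

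\textbf{The repair is to swap the roles of the two critical points; your outline then becomes the paper's proof.} The paper uses the Poincar\'e-series kernel $\mathcal K_u=\sum_m\chi_{-4}(m)m^{1-2u}\mathcal P_{m^2}$, built directly on $\Gamma_0(60)$. Up to an explicit factor it has the same pairing with $f$ as your $\theta E$, namely $\langle f,\mathcal K_u\rangle=D(3+2u)/(16\pi^2)$. The paper evaluates it at $u=0$, i.e.\ at $s=3$.

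This evaluation is where the real analytic work lies, and your appeal to ``Shimura's theorem'' skips it. Weight $3/2$ is below absolute convergence, so the kernel must be continued to $u=0$; the paper does this with Poisson summation and the Weber--Schafheitlin integral. One must also show that the possible zeta pole coming from discriminants $60v^2$ or $0$ is cancelled. Here it is cancelled by the vanishing of the local factor at $2$ at $u=0$, so $\mathcal K_0$ is a genuine cusp form with rational class-number coefficients.

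After that, the paper's steps are:
\begin{enumerate}
\item the Sturm identification $\mathcal K_0-H=(T_7^2+64)R$;
\item the annihilator argument;
\item the values $\zeta^{(30)}(2)=8\pi^2/75$ and $R_0(3)$, which together give $\mathscr L(3)=\pi^4\langle f,f\rangle_{60}/8$;
\item the functional equation you describe (the second equality in \eqref{eq:LTY}, taken from \cite{LTY}), used in the opposite direction from yours, to obtain $\mathscr L(2)$;
\item Proposition~\ref{prop:modular}, which gives the $|L|^2$ form.
\end{enumerate}
Your opening Rankin--Selberg/adjoint paragraph plays no role in this argument.
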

\begin{proof}[Proof sketch]
Write $\chi=\chi_{-4}$ and
$D(s)=\sum_{m\ge1}\chi(m)a_{m^2}m^{-s}$.
If $\mathcal P_j$ is the weight-three Poincar\'e series of index $j$, set
$\mathcal K_u=\sum_{m\ge1}\chi(m)m^{1-2u}\mathcal P_{m^2}$, initially for
$\Re u>1/4$. Unfolding gives
$\langle f,\mathcal K_u\rangle=D(3+2u)/(16\pi^2)$ for real $u$.
Poisson summation and the Weber--Schafheitlin integral \cite[\S10.22(iv)]{DLMF}
continue this kernel to $u=0$.
Crucially, for discriminants $\Delta=60v^2$ or $0$, the local factor at $2$
vanishes at $u=0$ and cancels the possible zeta pole; the remaining Fourier
sum is uniformly summable. Thus $\mathcal K_0$ is a cusp form, not a formal
critical sum.

Its rational class-number coefficients and Sturm's bound $36$ give
\[
 \mathcal K_0-H=(T_7^2+64)R,\qquad
 H=\frac{49}{144}P_1-\frac5{48}P_2+\frac5{16}P_3+\frac1{16}P_4,
\]
with an explicit rational eta-quotient witness $R$.
Since $T_7^*=-T_7$, spectral projection yields
$\langle f,\mathcal K_0\rangle=\bar\delta\langle f,f\rangle_{60}$,
where $\delta=(7-ir)(7-i)/576$.
The full Euler-factor identity is
$\zeta^{(30)}(2s-4)D(s)=R_0(s)\mathscr L(s)$, with
$R_0(s)=(1+(1+4ir)3^{-s})(1+(15+20i)5^{-s})$;
$\zeta^{(30)}$ omits $2,3,5$.
At $s=3$, $R_0(3)=16(7+ir)(7+i)/675$ and
$\zeta^{(30)}(2)=8\pi^2/75$, so
\[
 \bar\delta\langle f,f\rangle_{60}
 =\frac{75R_0(3)}{128\pi^4}\mathscr L(3),\qquad
 \frac{128\bar\delta}{75R_0(3)}=\frac18.
\]
The functional equation in \eqref{eq:LTY} and Proposition~\ref{prop:modular}
prove \eqref{eq:factorization}.
\end{proof}
Thus \eqref{eq:Dnorm} is equivalent to Lim--Tu--Yu's conjecture \eqref{eq:LTY};
neither proves \eqref{eq:main}. Their Table~1 gives $c_3=\pi^{-2}\Do$ and
$c_4=\pi^{-2}\De$ for $k=5$, not a formula for $A_0$.

\section{Reproducible numerical evidence}
The modular calculation uses the integer eta coefficients in \eqref{eq:eta}
and Mellin folding at $1/\sqrt{60}$. The separate source calculation uses
triple-Bessel and Domb identities \cite{BBBG}, Zhou's Hilbert transform
\cite[eq.~(3.1)]{Zhou}, and positive-series tail bounds.
The odd moment uses Chuang's pairing \cite{Chu}; the even determinant uses its
four $A,C$ entries. Independent decimal and binary directed-arithmetic
implementations enclose truncation and rounding errors:
\begin{center}
\begin{tabular}{lll}
\toprule
Quantity & Decimal value (truncated) & Absolute discrepancy\\
\midrule
$A_0$ against \eqref{eq:main} & $135.2683025808688375942262796\ldots$ & $<10^{-358}$\\
$\Do$ against \eqref{eq:Dnorm} & $27.2910140704559747633195429\ldots$ & $<10^{-352}$\\
$\De$ against \eqref{eq:Dnorm} & $34.7731307499325909815785970\ldots$ & $<10^{-352}$\\
\bottomrule
\end{tabular}
\end{center}
Every residual interval contains zero. The modular implementations agree
within $10^{-422}$ in each component of $L$.
Run \texttt{python -B anc/verify.py} for a standard-library replay of the
intervals and exact witnesses. Complete proofs are in \texttt{anc/proofs};
rational endpoints and a second exact reconstruction are included. Numerical agreement proves none of the
remaining Bessel-to-modular conjectures.


\begin{thebibliography}{12}
\bibitem{LTY}
Z.~L.~Lim, F.-T.~Tu, and J.-D.~Yu,
\emph{Galois representations and modularity for twisted moments of Kloosterman sums},
\href{https://arxiv.org/abs/2609.13576v1}{arXiv:2609.13576v1} (2026).

\bibitem{Chu}
P.-H.~Chuang,
\emph{On the periods of twisted moments of the Kloosterman connection}
(with an appendix in collaboration with J.-D.~Yu),
Ramanujan J. \textbf{65} (2024), no.~3, 1227--1277;
\href{https://arxiv.org/abs/2306.15216v2}{arXiv:2306.15216v2}.

\bibitem{BBBG}
D.~H.~Bailey, J.~M.~Borwein, D.~Broadhurst, and M.~L.~Glasser,
\emph{Elliptic integral evaluations of Bessel moments and applications},
J. Phys. A \textbf{41} (2008), 205203;
\href{https://arxiv.org/abs/0801.0891}{arXiv:0801.0891}.

\bibitem{Bro16}
D.~Broadhurst,
\emph{Feynman integrals, $L$-series and Kloosterman moments},
Commun. Number Theory Phys. \textbf{10} (2016), no.~3, 527--569;
\href{https://arxiv.org/abs/1604.03057}{arXiv:1604.03057}.

\bibitem{Bro17Paris}
D.~Broadhurst,
\emph{$L$-series from Feynman diagrams with up to 22 loops},
Workshop on Multi-Loop Calculations (methods and applications),
Paris, 7 June 2017.
\href{https://multi-loop-2017.sciencesconf.org/data/program/Broadhurst.pdf}{Lecture slides}.

\bibitem{Evans}
R.~Evans,
\emph{Hypergeometric ${}_3F_2(1/4)$ evaluations over finite fields and Hecke eigenforms},
Proc. Amer. Math. Soc. \textbf{138} (2010), no.~2, 517--531.

\bibitem{FSY}
J.~Fres\'an, C.~Sabbah, and J.-D.~Yu,
\emph{Quadratic relations between Bessel moments},
Algebra Number Theory \textbf{17} (2023), no.~3, 541--602;
\href{https://arxiv.org/abs/2006.02702}{arXiv:2006.02702}.

\bibitem{ZhouW}
Y.~Zhou,
\emph{Wick rotations, Eichler integrals, and multi-loop Feynman diagrams},
Commun. Number Theory Phys. \textbf{12} (2018), no.~1, 127--192;
\href{https://arxiv.org/abs/1706.08308}{arXiv:1706.08308} (2017 preprint).

\bibitem{Zhou}
Y.~Zhou,
\emph{Hilbert transforms and sum rules of Bessel moments},
Ramanujan J. \textbf{48} (2019), 159--172;
\href{https://arxiv.org/abs/1706.01068}{arXiv:1706.01068}.

\bibitem{Stein}
W.~Stein,
\emph{Modular Forms: A Computational Approach},
Graduate Studies in Mathematics \textbf{79}, American Mathematical Society, 2007.

\bibitem{DLMF}
NIST Digital Library of Mathematical Functions,
\href{https://dlmf.nist.gov/10.22}{\S10.22(iv)}, Weber--Schafheitlin integrals;
\href{https://dlmf.nist.gov/15.4}{\S15.4}, hypergeometric special values.

\bibitem{LMFDB}
The LMFDB Collaboration,
\emph{The $L$-functions and Modular Forms Database},
newform orbit \href{https://www.lmfdb.org/ModularForm/GL2/Q/holomorphic/60/3/b/a/}{\texttt{60.3.b.a}}.
\end{thebibliography}
\end{document}